\tikzset{commutative diagrams/.cd}
\numberwithin{equation}{section}
\newtheorem{theorem}{Theorem}[subsection]
\newtheorem{lemma}[theorem]{Lemma}
\newtheorem{proposition}[theorem]{Proposition}
\newtheorem{conjecture}[theorem]{Conjecture}
\theoremstyle{definition}
\newtheorem{definition-theorem}[theorem]{Definition-Theorem}
\newtheorem{example}[theorem]{Example}
\newtheorem{remark}[theorem]{Remark}
\newcommand{\N}{{\mathbb N}}
\newcommand{\Q}{{\mathbb Q}}
\newcommand{\C}{{\mathbb C}}
\newcommand{\A}{{\mathbb A}}
\newcommand{\OO}{\mathcal{O}}
\newcommand{\F}{{\mathbb F}}
\newcommand{\Fp}{{\mathbb F}_p}
\newcommand{\Fpbar}{{\overline{\mathbb F}_p}}
\newcommand{\Kbar}{{\overline{K}}}
\newcommand{\hhat}{\widehat{h}} %\hat is too narrow; avoid
\newcommand{\lra}{\longrightarrow}
\begin{document}
\title{Intersection of orbits for  polynomials in characteristic $p$}

\author{Simone Coccia}
\address{Department of Mathematics \\ University of Toronto \\ 40 St. George St. \\ Toronto, ON \\ Canada, M5S 2E4}
\email{simone.coccia@utoronto.ca}

\author{Dragos Ghioca}
\address{Department of Mathematics \\ University of British Columbia \\ 1984 Mathematics Road \\ Canada V6T 1Z2}
\email{dghioca@math.ubc.ca}

\author{Jungin Lee}
\address{Department of Mathematics\\ Ajou University\\ Suwon 16499\\ Republic of Korea}
\email{jileemath@ajou.ac.kr}

\author{Gyeonghyeon Nam}
\address{Department of Mathematics\\ Ajou University\\ Suwon 16499\\ Republic of Korea}
\email{ghnam@ajou.ac.kr}

%\date{\today}

%\bibliographystyle{abbrv}

\begin{abstract}
In \cite{GTZ1, GTZ2}, the following result was established: given polynomials $f,g\in\C[x]$ of degrees larger than $1$, if there exist $\alpha,\beta\in\C$ such that their corresponding orbits $\OO_f(\alpha)$ and $\OO_g(\beta)$ (under the action of $f$, respectively of $g$) intersect in infinitely many points, then $f$ and $g$ must share a common iterate, i.e., $f^m=g^n$ for some $m,n\in\N$. If one replaces $\C$ with a field $K$ of characteristic $p$, then the conclusion fails; we provide numerous examples showing the complexity of the problem over a field of positive characteristic. We advance a modified conjecture regarding polynomials $f$ and $g$ which admit two orbits with infinite intersection over a field of characteristic $p$. Then we present various partial results, along with connections with another deep conjecture in the area, the dynamical Mordell-Lang conjecture.  
\end{abstract}

\maketitle

\section{Introduction}
\label{sec:intro}

We start by setting up some basic notation for our paper in Subsection~\ref{subsec:notation}.

%%%%%%%%%%%%%%%%%%%%%%%%%%%%%%%%%%%%%%%%%%%%%%%%%%%%%%%%%%%%%%%%%%%%%%%%%%%%%%

\subsection{Notation}
\label{subsec:notation}

Throughout this paper, we denote by $\N_0:=\N\cup\{0\}$ the set of non-negative integers. Now, given a self-map $f$ on some quasiprojective variety $X$, we denote by $f^n$ its $n$-th compositional power; by convention, $f^0$ represents the identity map ${\rm id}_X$ on $X$. The orbit of a point $x\in X$ under $f$ is the set of all points $f^n(x)$ for $n\ge 0$; we  denote this orbit by $\OO_f(x)$. A \emph{preperiodic} point $x\in X$ for $f$ is a point with a finite orbit, i.e., it has the property that $f^m(x)=f^n(x)$ for some $0\le m<n$.

Given a field $K$, we always denote by $\Kbar$ a fixed algebraic closure of it. Now, given a polynomial $f\in K[x]$, a conjugate of it is any polynomial of the form $\mu^{-1}\circ f\circ \mu$, where $\mu\in\Kbar[x]$ is some linear polynomial. For any $a\in \Kbar$, we denote by $\tau_a$ the linear polynomial $\tau_a(x):=x+a$.
For a field $K$ of positive characteristic, a polynomial $f\in K[x]$ is called \emph{additive} if $f(x+y)=f(x)+f(y)$ in $K[x,y]$.

%%%%%%%%%%%%%%%%%%%%%%%%%%%%%%%%%%%%%%%%%%%%%%%%%%%%%%%%%%%%%%%%%%%%%%%%%%%%%%

\subsection{The problem of intersection of orbits over fields of characteristic $0$}
In \cite{GTZ1, GTZ2}, it was shown that if $f,g\in\C[x]$ are polynomials of degrees larger than $1$ with the property that there exist $\alpha,\beta\in\C$ such that $\OO_f(\alpha)\cap\OO_g(\beta)$ is infinite, then there exist $m,n\in\N$ such that $f^m=g^n$. The result of \cite{GTZ1} was the first major result supporting the dynamical Mordell-Lang conjecture. More precisely, the dynamical Mordell-Lang conjecture predicts that given a quasiprojective variety $X$ defined over a field $K$ of characteristic $0$, given a point $\alpha\in X(K)$, given a regular self-map $\Phi:X\lra X$, and given a subvariety $Y\subseteq X$, then the \emph{return set}:
\begin{equation}
\label{eq:return_set}
R_{X,\Phi,\alpha,Y}:=\left\{n\in\N_0\colon \Phi^n(\alpha)\in Y\right\} 
\end{equation}
is a union of finitely many sets of the form $\{ak+b\}_{k\in\N_0}$ for some given $a,b\in\N_0$. (Note that if $a>0$, then the set $\{ak+b\}_{k\in\N_0}$ is an infinite arithmetic progression, while if $a=0$, then the aforementioned set is a singleton.) The dynamical Mordell-Lang conjecture sparked a wide interest, leading to several partial results, along with numerous connections with other important questions in arithmetic dynamics. However, the conjecture still remains open in its full generality and no counterexamples are known; for a comprehensive discussion about it, we refer the reader to the book~\cite{BGT-book}.

The result of \cite{GTZ1} can be interpreted as follows: the dynamical action of a polynomial $f$ with complex coefficients is uniquely identified by \emph{any} infinite subset of \emph{any} orbit of $f$. Indeed, for any self-map $\Phi$ on any quasiprojective variety $X$, the main dynamical features of $\Phi$ (such as its set of preperiodic points, for example) are unchanged when replacing $\Phi$ by an iterate. The results of \cite{GTZ1, GTZ2} show that only polynomials which have a common iterate (and hence induce the same dynamical action) can have orbits which intersect in infinitely many points. Variants of the problem of intersection of orbits were studied for other dynamical systems (see \cite{intersection-MRL, intersection-4, intersection-2, Wang}). Furthermore, all these questions fit under the large umbrella of the unlikely intersection problem in arithmetic dynamics; for an  introduction to the topic of unlikely intersections from  classical arithmetic geometry, we refer the reader to the excellent book~\cite{Umberto}.

%%%%%%%%%%%%%%%%%%%%%%%%%%%%%%%%%%%%%%%%%%%%%%%%%%%%%%%%%%%%%%%%%%%%%%%%%%%%%%%

\subsection{The picture in positive characteristic}

If one studies the problem of intersection of orbits of polynomials $f$ and $g$ over a field $K$ of characteristic $p>0$, then there are numerous counterexamples to the strong conclusion from \cite{GTZ1, GTZ2} that $f$ and $g$ must share a common iterate. We present below one such example; more complicated examples appear in Section~\ref{sec:examples}.
\begin{example}
\label{ex:1}
Let $f,g\in\F_2(t)[x]$ be the polynomials $f(x)=x^2+x$ and $g(x)=x^2+t^2+t$. Then a simple induction shows that
\begin{equation}
\label{eq:101}
g^m(0)=t^{2^m}+t\text{ for each }m\ge 1.
\end{equation}
Also, it is easy to see that 
$$f^m(x)=\sum_{i=0}^m \binom{m}{i} x^{2^i}\text{ for each }m\ge 1,$$
which implies that
\begin{equation}
\label{eq:102}
f^{2^k}(t)=t^{2^{2^k}}+t\text{ for each }k\ge 0.
\end{equation}
Equations~\eqref{eq:101}~and~\eqref{eq:102} show that $\OO_f(t)\cap\OO_g(0)$ is infinite. 
However, one sees that no iterate of $f$ equals an iterate of $g$ since $g^m(0) = t^{2^m}+t$ for each $m \ge 1$, while $f^m(0) = 0$ for each $m \ge 1$.
\end{example}

We believe the following modification of the conclusion from \cite{GTZ1, GTZ2} holds for polynomials over fields of positive characteristic whose orbits intersect in infinitely many points.

\begin{conjecture}
\label{conj:main}
Let $K$ be a field of characteristic $p$ and let $f,g\in K[x]$ be polynomials of degrees larger than $1$. If there exist $\alpha,\beta\in K$ with the property that $\OO_f(\alpha)\cap\OO_g(\beta)$ is infinite, then at least one of the following two conditions holds:
\begin{itemize}
\item[(1)] there exist $m,n\in\N$ such that $f^m=g^n$; 
\item[(2)] there exist linear polynomials $\lambda, \mu \in\Kbar[x]$ and additive polynomials $\tilde{f},\tilde{g}\in\Kbar[x]$ such that 
\begin{equation}
\label{eq:condition_additive_1}
f=\lambda^{-1} \circ \tilde{f}\circ \lambda \text{ and }g=\mu^{-1} \circ \tilde{g}\circ \mu.
\end{equation}
Furthermore, there exists $m\in\N$ such that $\tilde{f}^m\circ \tilde{g}^m=\tilde{g}^m\circ \tilde{f}^m$.
\end{itemize}
\end{conjecture}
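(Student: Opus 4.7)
The plan is to adapt the Ghioca--Tucker--Zieve (GTZ) strategy from \cite{GTZ1, GTZ2}, while isolating the additive case as a genuinely new obstruction in positive characteristic. From the infinite intersection $\OO_f(\alpha)\cap\OO_g(\beta)$ one extracts an infinite sequence $(m_i,n_i)\in\N^2$ with $m_i,n_i\to\infty$ and $f^{m_i}(\alpha)=g^{n_i}(\beta)$. Comparing canonical heights (in the Call--Silverman sense over a suitable function-field model of $K$) of the common value gives $(\deg f)^{m_i}\hhat_f(\alpha)=(\deg g)^{n_i}\hhat_g(\beta)$ up to bounded error, forcing $(\deg f)^{m_i}/(\deg g)^{n_i}$ to stay bounded along the sequence. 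Passing to a subsequence and replacing $f$ and $g$ by suitable iterates, one may thus assume $\deg f^{m_i}=\deg g^{n_i}$ for all $i$.

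The algebraic curve $C_{m,n}:f^m(x)-g^n(y)=0$ then has $K$-rational points arising from the intersection hypothesis, so for fixed $(m,n)$ some irreducible component carries infinitely many $K$-rational points. By a Bilu--Tichy / function-field Siegel argument in characteristic $p$ (due to Voloch and others), such components are highly constrained: either they arise from a nontrivial common compositional factor of $f^m$ and $g^n$ -- a ``standard pair'' situation -- or they arise from the characteristic-$p$-specific additive exception in Voloch's classification. In the former case, Ritt's theorems on polynomial decomposition, applied across distinct pairs $(m_i,n_i)$, constrain $f$ and $g$ to share an iterate, yielding conclusion $(1)$. In the latter case, both $f$ and $g$ turn out to be conjugate to additive polynomials $\tilde f,\tilde g\in\Kbar[x]$ via linear $\lambda,\mu\in\Kbar[x]$, matching the hypothesis of alternative $(2)$.

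In the additive branch, after conjugating, we have infinitely many identities $\tilde f^{m_i}(\alpha')=\tilde g^{n_i}(\beta')$ with $\alpha'=\lambda(\alpha)$ and $\beta'=\mu(\beta)$, living inside the twisted polynomial ring $R=\Kbar\{\tau\}$ that acts on $\Kbar$ by $\Fp$-linear endomorphisms (with $\tau$ the Frobenius). Consider the finitely generated $\Fp$-subspace $V\subseteq\Kbar$ generated by the $\tilde f^m(\alpha')$ and $\tilde g^n(\beta')$ that appear; the infinite collisions force the operators induced by $\tilde f$ and $\tilde g$ on $V$ to share invariant structure, and a non-commutative pigeonhole in $R$ should produce an $m\ge 1$ such that $\tilde f^m$ and $\tilde g^m$ commute on $V$, hence (as commutation inside $R$ is a linear condition) in $R$ itself. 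The main obstacle is precisely this last step: the commuting-iterate condition is a delicate consequence of orbit-collision abundance in a non-commutative ring, and no off-the-shelf analogue of Skolem--Mahler--Lech is available for $R$. A subsidiary obstacle is setting up the function-field Siegel step in its full generality over an arbitrary field $K$ of positive transcendence degree over $\Fp$, where one must work simultaneously at all places of a suitable model of $K$ and rule out isotriviality precisely outside of the additive regime.
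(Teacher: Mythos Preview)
The statement you are attempting to prove is labeled a \emph{Conjecture} in the paper, and the paper does not claim to prove it. What the paper actually establishes is the height-based reduction to equal degrees (Theorem~\ref{thm:same_degree}) and the equivalence with Conjecture~\ref{conj:special}; the rest is heuristics, examples, and a conditional link to the dynamical Mordell--Lang conjecture in characteristic $p$. So there is no ``paper's own proof'' to compare against, and your proposal should be read as an outline of a research program, not a proof.

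That said, your outline has concrete gaps beyond the ones you flag. The paper's Subsection~\ref{subsec:remarks} explains precisely why the GTZ machinery does not port over. First, Ritt's decomposition theory in characteristic $p$ (as in \cite{Ritt-p}) requires all relevant extensions to be separable, which fails exactly in the additive/inseparable regime you want to isolate; so your appeal to ``Ritt's theorems \ldots\ applied across distinct pairs $(m_i,n_i)$'' in the non-additive branch is not justified as stated. Second, the function-field Siegel/Bilu--Tichy step is not a clean dichotomy ``standard pair vs.\ additive'': in characteristic $p$ the extra source of infinitely many integral points is \emph{isotriviality} (cf.\ \cite{Grauert}), and while additive $f$ forces $f(x)=g(y)$ to be isotrivial (as noted from \cite{Tom}), isotriviality is a much broader phenomenon and does not by itself pin down $f$ and $g$ as conjugates of additive polynomials. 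Your invocation of a ``Voloch classification'' giving exactly this dichotomy is not something the paper supports or cites. Third, in the additive branch your pigeonhole in $\Kbar\{\tau\}$ to extract commuting iterates is, as you yourself concede, unproven; the paper's examples (e.g.\ Example~\ref{ex:non-commute}) show that only an iterate need commute, and nothing in your argument produces the integer $m$.

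In short: your first paragraph (canonical heights forcing multiplicatively dependent degrees) matches what the paper actually proves; everything after that is where the conjecture genuinely lies open, and the paper's own discussion indicates that the obstacles you name are not minor technicalities but the heart of the problem.
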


First, we note that (similar to the problem studied in \cite{GTZ1, GTZ2}), one needs to assume the polynomials $f$ and $g$ have degrees larger than $1$ since otherwise there are many more classes of examples of polynomials which have orbits with infinite intersections; for more details, see Subsection~\ref{subsec:ex_deg_1}.

In our paper we provide various heuristics in support of Conjecture~\ref{conj:main} (for more details about our plan, see Subsection~\ref{subsec:remarks}). We also show that Conjecture~\ref{conj:main} reduces to the case of polynomials of same degree; in particular, we prove the following result.

\begin{theorem}
\label{thm:same_degree_0}
Let $K$ be a field of characteristic $p$ and let $f,g\in K[x]$ be polynomials of degrees $d\ge 2$ and $e\ge 2$, respectively. If there exist no positive integers $r$ and $s$ such that $d^r=e^s$, then for any $\alpha,\beta\in K$, we have that $\OO_f(\alpha)\cap\OO_g(\beta)$ is finite.
\end{theorem}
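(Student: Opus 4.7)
The plan is to prove the contrapositive: assume $\OO_f(\alpha)\cap\OO_g(\beta)$ is infinite and derive that $d^r=e^s$ for some positive integers $r,s$. First I would reduce to the case where $\alpha$ is not $f$-preperiodic and $\beta$ is not $g$-preperiodic, since otherwise one of the two orbits is already finite. Replacing $K$ by the subfield of $K$ generated over $\F_p$ by $\alpha,\beta$ and the coefficients of $f,g$, we may also assume $K$ is finitely generated over $\F_p$, and equip it with a Weil height $h$ coming from a projective model of $K$ over $\F_p$.

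Next I would bring in the canonical heights $\hat{h}_f(\xi):=\lim_n h(f^n(\xi))/d^n$ and $\hat{h}_g(\xi):=\lim_n h(g^n(\xi))/e^n$, which satisfy $\hat{h}_f\circ f = d\,\hat{h}_f$, $\hat{h}_g\circ g = e\,\hat{h}_g$, and each differs from $h$ by a uniformly bounded amount. Since $K$ is finitely generated over $\F_p$, Northcott's property holds in $K$, so non-preperiodicity of $\alpha,\beta$ forces $A:=\hat{h}_f(\alpha)>0$ and $B:=\hat{h}_g(\beta)>0$; moreover, in the function field setting these canonical heights may be taken to be rational. For any pair $(m,n)\in\N^2$ with $f^m(\alpha)=g^n(\beta)=:\gamma$, combining the two height expressions gives
\[
d^m A \;=\; \hat{h}_f(\gamma) \;=\; \hat{h}_g(\gamma)+O(1) \;=\; e^n B+O(1),
\]
so every such pair satisfies the exponential Diophantine inequality $|Ad^m-Be^n|\le C$ for a constant $C$ depending only on $f$ and $g$.

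The main step is to show that this inequality has only finitely many solutions $(m,n)\in\N^2$ when $d,e$ are multiplicatively independent positive integers and $A,B$ are positive rationals. Clearing denominators reduces to an integer inequality $|A'd^m-B'e^n|\le C'$ with $A',B'\in\Z_{>0}$ and $C'\in\N$, a classical Pillai-type exponential Diophantine problem. Rewriting as $|m\log d-n\log e-\log(B'/A')|=O(1/e^n)$ and invoking Baker's theorem on linear forms in logarithms for the three algebraic numbers $d,e,B'/A'$ gives a lower bound $|m\log d-n\log e-\log(B'/A')|\gg\max(m,n)^{-\kappa}$ for some effective $\kappa>0$ (the linear form is nonzero for all but at most one pair, since $d^r\neq e^s$ for all positive $r,s$). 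The two estimates combine into $e^n\le C''\max(m,n)^{\kappa}$; together with the fact that $m$ and $n$ are linearly comparable (from $Ad^m\asymp Be^n$), this bounds both $m$ and $n$, producing only finitely many pairs and the desired contradiction. The main obstacle is this last Diophantine step, which fundamentally rests on the transcendence input provided by Baker's theorem; the isotrivial subtleties specific to positive characteristic do not obstruct the height setup because non-preperiodicity in the finitely generated function-field context still forces $A,B>0$ via Northcott.
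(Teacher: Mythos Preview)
Your argument is correct and follows the same overall arc as the paper's proof: reduce to a finitely generated field, introduce canonical heights $\hhat_f,\hhat_g$, use their positivity on non-preperiodic points together with their rationality, and combine $|\hhat_f-\hhat_g|=O(1)$ with the functional equations to produce an integer inequality $|A'd^m-B'e^n|\le C'$ with $A',B'\in\Z_{>0}$ holding for infinitely many pairs $(m,n)$.

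The proofs diverge only at the final Diophantine step. The paper pigeonholes on the finitely many integers in $[-C',C']$ to obtain a fixed equation $A'd^m-B'e^n=v$ with infinitely many solutions, then invokes Laurent's theorem (Mordell--Lang for $\mathbb{G}_m^2$) to force $v=0$; two solutions of $A'd^m=B'e^n$ then give $d^{m_2-m_1}=e^{n_2-n_1}$ directly. Your route via Baker's lower bound for linear forms in logarithms is the classical Pillai-type argument and is equally valid; both approaches rest on transcendence input of comparable depth, but the paper's pigeonhole-plus-Laurent packaging avoids the explicit estimates. A secondary minor difference is that you generate $K$ over $\F_p$, so the constant field is finite and Northcott applies directly to get $\hhat_f(\alpha),\hhat_g(\beta)>0$, whereas the paper generates $K$ over $\Fpbar$ and instead cites the results of Benedetto and Baker on canonical heights over function fields to obtain the same positivity.
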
 

We note that conclusion~(2) from Conjecture~\ref{conj:main} is met by all examples we found of polynomials $f$ and $g$ with orbits which intersect in infinitely many points and for  which condition~(1) does not hold (for more details, see Section~\ref{sec:examples}). Also, our examples suggest that perhaps an even stronger conclusion than the commutation of some iterates of $f$ and respectively $g$ must hold; however, the variety of our examples did not offer a clear option for such a stronger condition. Finally,  conclusion~(2) alone would not always guarantee the existence of two orbits with infinite intersection (under the action of two commuting additive polynomials $f$ and $g$, for example), but it seems difficult to find a necessary and sufficient condition to replace the current conclusion~(2) in Conjecture~\ref{conj:main}.

%%%%%%%%%%%%%%%%%%%%%%%%%%%%%%%%%%%%%%%%%%%%%%%%%%%%%%%%%%%%%%%%%%%%%%%%%%%%%%%

\subsection{Further remarks}
\label{subsec:remarks}

We first note that Conjecture~\ref{conj:main} fits into a recent series of papers searching for analog statements over fields of positive characteristic for some of the most important results from the past $20$ years in arithmetic dynamics over fields of characteristic $0$ (see \cite{G-Sina, X, X-Y}, for example). Two of the most active areas of recent research in arithmetic dynamics have been the unlikely intersection principle and the dynamical Mordell-Lang conjecture. There are only a few papers dealing with the unlikely intersection principle for dynamical systems in characteristic $p$ (see \cite{B-Masser, B-Masser-2, G-2, GH}, for example); similarly, there are only a few articles studying  the dynamical Mordell-Lang conjecture in characteristic $p$ (see \cite{JIMJ, G, L-N}, for example). Both of these problems turn out to be \emph{harder} in positive characteristic; for example, in \cite{JIMJ}, it is proven that some special case of the dynamical Mordell-Lang conjecture in characteristic $p$ is already \emph{equivalent} with a deep question regarding polynomial-exponential equations \emph{in characteristic $0$}. Our Conjecture~\ref{conj:main} is both a question about unlikely intersections and it is also directly connected to the dynamical Mordell-Lang conjecture in characteristic $p$ (see Section~\ref{sec:proofs_2} for more details). 

We discuss now some of the difficulties one encounters when trying to employ the strategy from \cite{GTZ1, GTZ2}  for solving Conjecture~\ref{conj:main}. The main two ingredients of the proofs from \cite{GTZ1, GTZ2} are coming from Ritt's theory \cite{Ritt} for polynomial decomposition and from Bilu-Tichy's explicit description \cite{Bilu-Tichy} of all pairs of polynomials $(f,g)$ defined over a number field with the property that the curve $f(x)=g(y)$ contains infinitely many integral points. Both of these two ingredients are missing when working over a field of characteristic $p$. Part of Ritt's theory holds (see \cite{Ritt-p}) assuming \emph{all} field extensions generated by the given polynomials are separable (which would be the case when dealing with polynomials of degrees \emph{less than $p$}), but in general, there are numerous complications arising from additive polynomials. As for Bilu-Tichy's theory, even though there is a classification of all pairs of polynomials $(f,g)$ for which $f(x)-g(y)$ has a factor of degree at most $2$ (see \cite{Bilu-Tichy-p}), in characteristic $p$ we also have that isotrivial curves contain infinitely many integral points (see \cite{Grauert}). Combining the results of \cite{Bilu-Tichy-p} and \cite{Grauert} leads to a very complicated picture in characteristic $p$ for curves of the form $f(x)=g(y)$ which contain infinitely many integral points. For example, as noted in \cite[Section~5,~p.~1581]{Tom}, each curve of the form $f(x)=g(y)$ is already isotrivial if $f(x)$ is an additive, separable polynomial; once again, this motivates treating as \emph{special} all additive polynomials (see conclusion~(2) from our Conjecture~\ref{conj:main}). Furthermore, in Section~\ref{sec:proofs_2} we connect Conjecture~\ref{conj:main} to a special case of the dynamical Mordell-Lang conjecture in characteristic $p$, which is known to be notoriously difficult (as shown in \cite{JIMJ}). All these observations lead us to believe Conjecture~\ref{conj:main} is significantly more difficult than the results obtained in \cite{GTZ1, GTZ2}.

%%%%%%%%%%%%%%%%%%%%%%%%%%%%%%%%%%%%%%%%%%%%%%%%%%%%%%%%%%%%%%%%%%%%%%%%%%%%%%%

\subsection{Plan for our paper}

We start by providing ample examples in Section~\ref{sec:examples} of pairs of polynomials $(f,g)$ (defined over fields of positive characteristic) which have orbits with infinite intersection even though $f$ and $g$ do not share a common iterate. In all our examples, the polynomials $f$ and $g$ are conjugates of suitable additive polynomials and they satisfy condition~(2) from Conjecture~\ref{conj:main}.

We continue by proving in Section~\ref{sec:proofs} (using an argument similar to the one from \cite[Section~8]{GTZ2}) that we may assume in Conjecture~\ref{conj:main} that $f$ and $g$ have the same degree (see Theorem~\ref{thm:same_degree}, which is a strengthening of Theorem~\ref{thm:same_degree_0}).  Furthermore, this allows us to connect the problem for intersecting orbits with the dynamical Mordell-Lang conjecture in characteristic $p$; this is detailed in Section~\ref{sec:proofs_2}. In particular, as explained in Remark~\ref{rem:p-sets}, this provides a motivation for condition~(2) from Conjecture~\ref{conj:main}.

\bigskip

{\bf Acknowledgments.} S. Coccia and D. Ghioca were partially supported by an NSERC Discovery grant. J. Lee and G. Nam were supported by the National Research Foundation of
Korea (NRF) grant funded by the Korea government (MSIT) (No.
RS-2024-00334558).

%%%%%%%%%%%%%%%%%%%%%%%%%%%%%%%%%%%%%%%%%%%%%%%%%%%%%%%%%%%%%%%%%%%%%%%%%%%%%%%
%%%%%%%%%%%%%%%%%%%%%%%%%%%%%%%%%%%%%%%%%%%%%%%%%%%%%%%%%%%%%%%%%%%%%%%%%%%%%%%

\section{Examples of orbits of polynomials with infinite intersection}
% which intersect in infinitely many points
\label{sec:examples}

From now on, we let $K$ be a field of characteristic $p$, $\Kbar$ be a fixed algebraic closure of $K$ and $\Fpbar$ be the algebraic closure of $\Fp$ inside $\Kbar$.

We start by proving the following easy fact which will be used throughout our paper. 
\begin{lemma}
\label{lem:affine}
If $f(x) \in K[x]$ is an additive polynomial of degree larger than $1$, then for any $\gamma\in K$, the polynomial $f_1(x):=f(x)+\gamma$ is a conjugate of $f(x)$.
\end{lemma}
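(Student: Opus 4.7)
The plan is to exhibit an explicit conjugating linear polynomial of the form $\tau_a(x) = x+a$ for a suitable $a \in \Kbar$. Concretely, I would compute
\[
\tau_a^{-1} \circ f \circ \tau_a(x) \;=\; f(x+a) - a \;=\; f(x) + f(a) - a,
\]
where the last equality uses that $f$ is additive (so $f(x+a) = f(x) + f(a)$ in $\Kbar[x]$). For this to equal $f_1(x) = f(x) + \gamma$, it suffices to choose $a \in \Kbar$ satisfying the single scalar equation
\[
f(a) - a \;=\; \gamma.
\]

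The next step is to ensure such an $a$ exists. Since $f$ is additive of degree larger than $1$, its degree is a power of $p$ at least $p \geq 2$, so the polynomial $f(x) - x - \gamma \in \Kbar[x]$ is nonconstant. By algebraic closedness of $\Kbar$, it has at least one root $a \in \Kbar$. Setting $\mu := \tau_a$ then yields $\mu^{-1}\circ f\circ \mu = f_1$, which exhibits $f_1$ as a conjugate of $f$ by a linear polynomial in $\Kbar[x]$ in the sense defined in Subsection~\ref{subsec:notation}.

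There is no real obstacle here: the entire content is additivity of $f$, which turns the composition $f(x+a)-a$ into $f(x) + (f(a)-a)$, reducing the conjugation problem to solving a single polynomial equation of positive degree over the algebraically closed field $\Kbar$. I would simply be careful to note that $a$ is only guaranteed to lie in $\Kbar$, not in $K$, which is why Subsection~\ref{subsec:notation} allows the conjugating linear polynomial to have coefficients in $\Kbar[x]$ rather than $K[x]$; and to invoke the hypothesis $\deg f > 1$ precisely at the point where we need $f(x)-x-\gamma$ to be nonconstant.
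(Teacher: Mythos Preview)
Your proof is correct and is essentially identical to the paper's own argument: the paper simply chooses $\delta\in\Kbar$ with $f(\delta)-\delta=\gamma$ and observes that $\tau_{-\delta}\circ f\circ \tau_\delta=f_1$. You have merely spelled out in more detail why such a $\delta$ exists and how additivity is used.
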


\begin{proof} 
Choose $\delta\in\Kbar$ such that $f(\delta) - \delta=\gamma$. Then we have $\tau_{-\delta}\circ f\circ \tau_\delta = f_1$.
\end{proof}

%%%%%%%%%%%%%%%%%%%%%%%%%%%%%%%%%%%%%%%%%%%%%%%%%%%%%%%%%%%%%%%%%%%%%%%%%%%%%%%

\subsection{The orbit intersection problem when one of the polynomials is linear}
\label{subsec:ex_deg_1}
In this Subsection, we show that there are numerous examples of pairs of polynomials $(f,g)$ with $f$ linear having the property that they have orbits which intersect in infinitely many points.

\begin{example}
\label{ex:simple_linear}
For some $t\in K \setminus \Fpbar$ and an integer $r>1$, we let $f(x)=tx$ and $g(x)=x^r$. Then clearly $\OO_f(1)\cap\OO_g(t)$ is infinite.
\end{example}

However, we can construct more elaborate examples involving any given linear polynomial $f(x)=t x + \delta$, as long as $t\notin\Fpbar$. On the other hand, we also note that whenever $\gamma\in\Fpbar$, the linear polynomial $f(x)=\gamma x+\delta$ has finite order, i.e., all its orbits are finite.

\begin{example}
\label{ex:difficult_linear}
We let $f(x)=tx + \delta$ (for some $t,\delta\in K$, with $t\notin\Fpbar$) and also, we let $\epsilon:=\frac{\delta}{t-1}$. Then letting $\alpha:=1-\epsilon$, we get that
\begin{equation}
\label{eq:4}
\OO_f(\alpha)=\left\{t^n-\epsilon\colon n\ge 0 \right\}.
\end{equation}
We let $g(x)=\tau_{-\epsilon}\circ x^r\circ \tau_\epsilon$ (for any integer $r\ge 2$). Then letting $\beta:=t-\epsilon$, we get
\begin{equation}
\label{eq:3}
\OO_g(\beta)=\left\{t^{r^n}-\epsilon\colon n\ge 0\right\}.
\end{equation}
Equations~\eqref{eq:4}~and~\eqref{eq:3} show that $\OO_f(\alpha)\cap\OO_g(\beta)$ is infinite.
\end{example}

%%%%%%%%%%%%%%%%%%%%%%%%%%%%%%%%%%%%%%%%%%%%%%%%%%%%%%%%%%%%%%%%%%%%%%%%%%%%%%%

\subsection{Examples showing that condition~(2) is essential for Conjecture~\ref{conj:main}}
\label{subsec:the_examples}

Our next example is a vast generalization of Example~\ref{ex:1}. 
\begin{example}
\label{ex:1_general}
We let $K:=\Fp(t)$, $f(x) \in K[x]$ be an additive polynomial of degree larger than $1$ and $g(x):=f(x)+x+f(t)$. A simple induction on $m$ yields that
\begin{equation}
\label{eq:15}
g^m(0)=\sum_{i=1}^{m} \binom{m}{i}\cdot f^i(t).
\end{equation} 
Therefore, for each $k\ge 1$ we have
\begin{equation}
\label{eq:16}
g^{p^k}(0)=f^{p^k}(t).
\end{equation}
In particular, this means that $\OO_f(t)\cap \OO_g(0)$ is infinite. Also, note that $g(x)$ is a conjugate of the additive polynomial $\tilde{g}(x):=f(x)+x$ (see Lemma~\ref{lem:affine}); furthermore, it is immediate to see that $f(x)$ commutes with $\tilde{g}(x)$. On the other hand, $f(x)$ and $g(x)$ normally share no common iterate; this is immediately seen if $f\in\Fpbar[x]$ since $g(x)$ is not defined over $\Fpbar$, but it is also true for almost any choice of additive polynomial $f(x)$ (see equation~\eqref{eq:15}).
\end{example}

\begin{remark}
\label{rem:p-sets_0}
Interestingly, in Example~\ref{ex:1_general}, the return set
\begin{equation}
\label{eq:17}
R:=R_{f,t,g,0}:=\left\{(m,n) \in \N_0 \times \N_0 \colon f^m(t)=g^n(0)\right\} 
\end{equation}
is the set of all pairs of integers $(p^k,p^k)$ for each $k\ge 0$. The fact that the return set $R$ from equation~\eqref{eq:17} consists of pairs of integers involving powers of $p$ is a common feature for all of our examples of polynomials $(f,g)$ with two orbits intersecting infinitely often, even though the polynomials do not share a common iterate. We believe \emph{all possible} examples of such polynomials $(f,g)$ have a return set similar to the one from equation~\eqref{eq:17} (see Section~\ref{sec:proofs_2}, especially Remark~\ref{rem:p-sets}).
\end{remark}

The following example shows that condition~(2) from Conjecture~\ref{conj:main} cannot be strengthened by asking that the two corresponding additive polynomials commute.
\begin{example}
\label{ex:non-commute}
Let $r\ge 2$ be an integer, $\lambda\in \F_{p^r}$ be a generator for the field extension $\F_{p^r}/\F_p$ and $K:=\F_{p^r}(t)$. Also let $f(x)=x^p$ and $g = \tau_{-\lambda t} \circ \tilde{g} \circ \tau_{\lambda t}$ for
$\tilde{g}(x)=\lambda x + x^{p^r}$. Then for any $m\ge 1$, we have
\begin{equation}
\label{eq:11}
\tilde{g}^m(x)=\sum_{i=0}^m \binom{m}{i}\cdot \lambda^i\cdot x^{p^{r(m-i)}}.
\end{equation}
So, for any $k\ge 1$, we have that
\begin{equation}
\label{eq:12}
g^{p^{rk}}((1-\lambda)t)=t^{p^{rp^{rk}}} = f^{rp^{rk}}(t).
\end{equation} 
Equation~\eqref{eq:12} shows that $\OO_f(t)\cap \OO_g\left((1-\lambda )t\right)$ is infinite. 
However, no iterate of $f$ equals an iterate of $g$ since $g^m(-\lambda t) = -\lambda t$ for each $m \ge 1$, while $f^m(-\lambda t) = (-\lambda t)^{p^m} \neq -\lambda t$ for each $m \ge 1$.
Furthermore, since $\lambda$ is a generator for $\F_{p^r}/\Fp$, $\tilde{g}$ commutes with $f^r$ but does not commute with $f$.
\end{example}

\begin{example} \label{ex:general}
Let $K$ be a field of characteristic $p$, $f$ and $h$ be additive polynomials over $K$ such that $\deg f > 1$ and $f^m\circ  h=h\circ f^m$ ($m \ge 1$), $\delta \in \Kbar$ and $g := \tau_{\delta} \circ (f^m+h) \circ \tau_{-\delta}$. Then for any $k \ge 0$ and $\alpha \in \Kbar$, we have
\begin{equation} \label{eq:exg1}
g^{p^k}(\alpha + \delta) = f^{mp^k}(\alpha) + h^{p^k}(\alpha) + \delta
\end{equation}
so $g^{p^k}(\alpha + \delta) = f^{mp^k}(\alpha)$ if and only if $h^{p^k}(\alpha) + \delta = 0$. Now assume that $\alpha$ is a preperiodic under $h$, non-preperiodic under $f$ and choose any $\delta \in \Kbar$ such that $h^{p^k}(\alpha) + \delta = 0$ for infinitely many $k \ge 0$. Then $\OO_f(\alpha) \cap \OO_g\left( \alpha+\delta \right)$ is infinite.

We note that this example generalizes both Example \ref{ex:1_general} and \ref{ex:non-commute}.
\begin{enumerate}
    \item If $K=\F_{p}(t)$, $h(x)=x$, $m=1$ and $(\alpha, \delta)=(t, -t)$, we have Example \ref{ex:1_general}.

    \item If $K=\F_{p^r}(t)$, $f(x)=x^{p}$, $h(x) = \lambda x$ ($\lambda \in \F_{p^r}$ is a generator of $\F_{p^r}/\F_p$), $m=r$ and $(\alpha, \delta)=(t, -\lambda t)$, we have Example \ref{ex:non-commute}. In general, if $h(x) = \lambda x$ ($\lambda \in \F_{p^r}$) then we can choose any $f$ and $m$ such that $f^m(x) = a_0x + a_1x^{p^r}+ a_2x^{p^{2r}} + \cdots$.
\end{enumerate}
\end{example}

Now we provide an example which is not a consequence of the general construction shown in Example \ref{ex:general}. We will use the following lemma.

\begin{lemma}\label{lem:nonlinearterm}
    Let $K$ be a field of characteristic $p$ and $t \in K$. Then for every $n \ge 1$,
    \begin{equation}\label{eq:lemproof}
    (t^{p-1}+t^{p-2}+\cdots + t+1)^{\frac{p^n-1}{p-1}} = t^{p^n-1}+t^{{p^n-2}}+\cdots +t+1.
\end{equation}
\end{lemma}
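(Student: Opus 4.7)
The plan is to show that both sides of the identity equal $(t-1)^{p^n-1}$ as polynomials in $\Fp[t]$, reducing the proof to a single application of the Frobenius endomorphism.

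First, I would treat $t$ as an indeterminate, so that the claimed equality becomes an identity between elements of $\Fp[t]$; it then suffices to prove it there and then specialize to any $t\in K$. The key input is the characteristic-$p$ Frobenius identity
\[
(t-1)^{p^k} = t^{p^k} - 1 \in \Fp[t] \quad \text{for every } k \ge 0,
\]
which, upon dividing by $t-1$ in the polynomial ring (legitimate since $t-1$ divides $t^{p^k}-1$), yields
\[
1 + t + \cdots + t^{p^k - 1} = (t-1)^{p^k - 1}.
\]

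Next, I would apply this with $k = 1$ to rewrite the base on the left-hand side of \eqref{eq:lemproof} as $(t-1)^{p-1}$, obtaining
\[
(t^{p-1} + t^{p-2} + \cdots + 1)^{(p^n-1)/(p-1)} = \bigl((t-1)^{p-1}\bigr)^{(p^n-1)/(p-1)} = (t-1)^{p^n-1},
\]
while applying it with $k = n$ directly gives
\[
t^{p^n-1} + t^{p^n-2} + \cdots + 1 = (t-1)^{p^n-1}.
\]
The two sides thus coincide.

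There is essentially no obstacle. The only bookkeeping is the trivial check $(p-1)\cdot (p^n-1)/(p-1) = p^n - 1$, together with the reminder that a polynomial identity in $\Fp[t]$ specializes to every $t\in K$ regardless of whether $t=1$ in $K$.
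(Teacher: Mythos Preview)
Your argument is correct and is genuinely different from the paper's proof. The paper proceeds by induction on $n$, writing $\frac{p^m-1}{p-1}=p^{m-1}+\frac{p^{m-1}-1}{p-1}$ and multiplying out the two resulting factors; you instead identify a common closed form $(t-1)^{p^n-1}$ for both sides via the Frobenius identity $(t-1)^{p^k}=t^{p^k}-1$ in $\Fp[t]$. Your route is shorter and more conceptual, since it reveals that both sides are the same simple power of $t-1$ and dispenses with the inductive bookkeeping; the paper's induction, on the other hand, is entirely self-contained and does not require spotting that closed form in advance. Either way the lemma follows, and your remark about first proving the identity in $\Fp[t]$ and then specializing handles the edge case $t=1$ cleanly.
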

\begin{proof}
We use induction on $n$. Assume that equation~\eqref{eq:lemproof} holds for $n =m-1 \ge 1$. Then
\begin{equation*}
\begin{split}
    &(t^{p-1}+\cdots + t+1)^{\frac{p^{m}-1}{p-1}}\\
    = \, &
(t^{p-1}+\cdots + t+1)^{p^{m-1}} (t^{p-1}+\cdots + t+1)^{\frac{p^{m-1}-1}{p-1}} \\
= \, & (t^{p^{m-1}(p-1)}+ \cdots + t^{p^{m-1}}+1) (t^{p^{m-1}-1}+\cdots +t+1 ) \\
= \, & t^{p^m-1} + t^{p^m-2} + \cdots + t + 1
\end{split}
\end{equation*} 
so equation~\eqref{eq:lemproof} holds for $n = m$. 
\end{proof}

\begin{example}
\label{ex:x^p}
Let $K = \F_p(t)$, $f(x) = x^{p^{p-1}}+x^{p^{p-2}}+\cdots +x^p+x$ and $g(x) = x^p+t$. Since $f^m(x)=\sum_{i=0}^{{m(p-1)}} a_i x^{p^i}$ where $a_i$ is the coefficient of $t^i$ in $(t^{p-1}+ \cdots +t+1)^m$, we have 
\begin{equation} \label{eq:exxp1}
f^{\frac{p^n-1}{p-1}}(x) 
= \sum_{i=0}^{p^n-1} x^{p^i}
\end{equation}
by Lemma \ref{lem:nonlinearterm}. We also have $g^n(x) = x^{p^n} + \sum_{i=0}^{n-1} t^{p^i}$. 
Now choose $\delta \in \Kbar$ such that $\delta^p - \delta = t$. Let $f_1 = \tau_{\delta} \circ f \circ \tau_{- \delta}$ and $g_1 = \tau_{\delta} \circ g \circ \tau_{- \delta}$. Then $g_1(x)=x^p$ and
\begin{equation} \label{eq:exxp2}
f_1^{\frac{p^n-1}{p-1}}(t + \delta) = g_1^{p^n}(\delta) = \sum_{i=0}^{p^n-1} t^{p^i} + \delta
\end{equation}
for every $n \ge 1$, which implies that $\OO_{f_1}(t + \delta) \cap \OO_{g_1}(\delta)$ is infinite.
Note that if $p$ is odd, then $f^r(x)$ is not of the form $h(x) = x^{p^k}+cx$ for every $r \ge 1$. If $f^r(x) = x^{p^k}+cx$, then clearly $c=1$. Now we have $f^r(1) =0$ in $K$, while $h(1) = 2 \neq 0$ in $K$. 
\end{example}

%%%%%%%%%%%%%%%%%%%%%%%%%%%%%%%%%%%%%%%%%%%%%%%%%%%%%%%%%%%%%%%%%%%%%%%%%%%%%%%
%%%%%%%%%%%%%%%%%%%%%%%%%%%%%%%%%%%%%%%%%%%%%%%%%%%%%%%%%%%%%%%%%%%%%%%%%%%%%%%

\section{Reduction of Conjecture~\ref{conj:main} to the case of polynomials of same degree} 
\label{sec:proofs}

One of the common features of all examples of polynomials $(f,g)$ having two orbits with infinite intersection is the fact that the degrees of our polynomials are multiplicative dependent, i.e., $\deg(f)^r=\deg(g)^s$ for some suitable positive integers $r$ and $s$; in other words some iterate of $f$ have the same degree as some iterate of $g$. The existence of this feature is explained in the following theorem, which is the main result of this Section. It is a slight extension of Theorem~\ref{thm:same_degree_0}.

% The main result of this Section is the following theorem, which is a slight extension of Theorem~\ref{thm:same_degree_0}.
\begin{theorem}
\label{thm:same_degree}
Let $K$ be a field of characteristic $p$, let $\alpha,\beta\in K$, let $f,g\in K[x]$ be polynomials of degrees $d\ge 2$ and $e \ge 2$, respectively. If $\OO_f(\alpha)\cap\OO_g(\beta)$ is infinite, then the following conditions must hold:
\begin{itemize}
\item[(i)] there exist integers $r$ and $s$ such that $d^r=e^s$; and 
\item[(ii)] there exist $a,b\in \N_0$ and there exist infinitely many $n\in\N$ such that
\begin{equation}
\label{eq:1001}
f^{rn+a}(\alpha)=g^{sn+b}(\beta).
\end{equation}
\end{itemize}
\end{theorem}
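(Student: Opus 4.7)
The plan is to adapt the height-theoretic argument of \cite[Section~8]{GTZ2} to positive characteristic. First, I would observe that the infinitude of $\OO_f(\alpha)\cap\OO_g(\beta)$ forces $\alpha$ to be non-preperiodic under $f$ and $\beta$ to be non-preperiodic under $g$ (otherwise one of the orbits is finite), and I may replace $K$ by the subfield of $\Kbar$ generated by $\alpha,\beta$, and the coefficients of $f,g$, reducing to the case in which $K$ is finitely generated over $\F_p$. Next, I would set up a Weil-type height $h\colon K\to\R_{\ge 0}$ compatible with polynomial dynamics on $\P^1$, i.e.\ satisfying $h(f(x))=d\cdot h(x)+O(1)$ and $h(g(x))=e\cdot h(x)+O(1)$. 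This yields canonical heights $\hat h_f(x):=\lim_{m\to\infty}h(f^m(x))/d^m$ and $\hat h_g$, each at bounded distance from $h$, and hence $\hat h_f-\hat h_g=O(1)$ on $K$. The hard part, and the main obstacle in positive characteristic, will be ensuring that $A:=\hat h_f(\alpha)>0$ and $B:=\hat h_g(\beta)>0$: unlike over number fields, Northcott's theorem fails over function fields of characteristic~$p$, so non-preperiodicity alone does not force positive canonical height. I plan to circumvent this by selecting $h$ through a suitable non-archimedean place of $K$ at which the orbits escape to infinity, and by handling potentially isotrivial polynomials separately.

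Given the positivity of $A$ and $B$, the identities $\hat h_f(x)=d^m A$ and $\hat h_g(x)=e^n B$ for $x=f^m(\alpha)=g^n(\beta)$ combine with $\hat h_f-\hat h_g=O(1)$ to produce
\[
\bigl| A\, d^m - B\, e^n \bigr| \le C
\]
for all $(m,n)$ in the infinite return set $R:=\{(m,n)\in\N_0^2: f^m(\alpha)=g^n(\beta)\}$, with $C$ an absolute constant. From any two solutions $(m_1,n_1),(m_2,n_2)\in R$ with $m_1>m_2$, $n_1>n_2$, a short cross-multiplication argument yields $|d^u-e^v|\le C'$ for $(u,v):=(m_1-m_2,n_1-n_2)$ and some constant $C'$ (at least when $n_2$ is sufficiently large relative to $n_1$). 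Assuming $d$ and $e$ were multiplicatively independent, the effective Baker--Tijdeman theorem, which is a consequence of Baker's lower bound for linear forms in logarithms, allows only finitely many positive integer solutions $(u,v)$ to $|d^u-e^v|\le C'$. A pigeonhole argument on the consecutive differences $(m_{k+1}-m_k, n_{k+1}-n_k)$ along an infinite sequence in $R$ then produces a single pair $(u,v)$ occurring infinitely often; evaluating at $x_k:=f^{m_k}(\alpha)\in K$ shows $f^u(x_k)=g^v(x_k)$ at infinitely many distinct $x_k$, forcing the polynomial identity $f^u=g^v$, hence $d^u=e^v$, contradicting multiplicative independence. This establishes (i).

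Finally, fixing minimal $r,s\in\N$ with $d^r=e^s$, the ratios $d^m/e^n$ lie in the discrete set $\{d^{k/r}:k\in\Z\}$, while the above bound $|Ad^{m_k}-Be^{n_k}|\le C$ forces $d^{m_k}/e^{n_k}\to B/A$. By discreteness, this convergence means $rm_k-sn_k$ is a fixed integer for all sufficiently large $k$, so the infinite subfamily of $R$ obtained in this way takes the form $(m_k,n_k)=(a+rn,\, b+sn)$ for fixed $a,b\in\N_0$ and infinitely many $n\in\N$, establishing (ii).
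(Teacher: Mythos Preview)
Your high-level strategy matches the paper's: reduce to a finitely generated field, build canonical heights $\hhat_f,\hhat_g$, and exploit $|\hhat_f-\hhat_g|=O(1)$ to control the exponents. However, two of your intermediate steps contain genuine gaps, and the paper closes them by a different mechanism than the one you propose.

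\textbf{Positivity of $A$ and $B$.} You correctly flag this as the hard point, but ``selecting $h$ through a suitable non-archimedean place at which the orbits escape'' is not how this is resolved; a single place does not produce a height for which the canonical-height machinery (in particular $|\hhat_f-h|=O(1)$) is available. The paper instead takes $K$ finitely generated over $\Fpbar$ (not over $\F_p$) and invokes the function-field results of Benedetto and Baker: $\hhat_f(\alpha)=0$ forces $\alpha$ to be preperiodic \emph{or} the pair $(f,\alpha)$ to descend to the constant field. Since the constant field is $\Fpbar$, the second alternative also forces preperiodicity, so $A,B>0$ follows immediately. Your ``handle isotrivial polynomials separately'' is exactly this observation, but you should make it explicit rather than leave it as a plan.

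\textbf{The cross-multiplication step.} This is where your argument breaks. From $|Ad^{m_1}-Be^{n_1}|\le C$ and $|Ad^{m_2}-Be^{n_2}|\le C$ with $u=m_1-m_2$, $v=n_1-n_2$, one only gets
\[
|d^{u}-e^{v}|\;\le\;\frac{C(1+e^{v})}{A\,d^{m_2}}\;\asymp\;\frac{C}{B}\,e^{\,n_1-2n_2},
\]
which is bounded by an absolute $C'$ only when $n_1\le 2n_2+O(1)$. Nothing in your setup guarantees that $R$ contains such pairs; the gaps $n_{k+1}-n_k$ could grow super-linearly. Consequently the Baker--Tijdeman input and the subsequent pigeonhole on consecutive differences are not justified.

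The paper avoids this entirely by using a fact you do not mention: over function fields the canonical heights $\hhat_f(\alpha),\hhat_g(\beta)$ are \emph{rational} (DeMarco--Ghioca). Clearing denominators turns $|Ad^m-Be^n|\le C$ into a bound on an \emph{integer}, so by pigeonhole one gets an exact equation $\tilde u_1 d^m-\tilde u_2 e^n=v$ holding for infinitely many $(m,n)$; Laurent's theorem on $S$-unit equations then forces $v=0$, and comparing two solutions gives $d^{m_2-m_1}=e^{n_2-n_1}$, which is~(i). Part~(ii) then follows from the \emph{exact} relation $\tilde u_1 d^m=\tilde u_2 e^n$ rather than from your convergence-plus-discreteness argument (which, incidentally, is correct once positivity is in hand, and yields the same conclusion).

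Your idea of deducing a polynomial identity $f^u=g^v$ from infinitely many coincidences $f^u(x_k)=g^v(x_k)$ is elegant and would give more than the paper claims, but it rests on the pigeonhole that you have not secured.
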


We prove Theorem~\ref{thm:same_degree} in Subsection~\ref{subsec:proof_thm}. Then, in Subsection~\ref{subsec:conseq_thm}, we derive an important reduction in our Conjecture~\ref{conj:main}.

%%%%%%%%%%%%%%%%%%%%%%%%%%%%%%%%%%%%%%%%%%%%%%%%%%%%%%%%%%%%%%%%%%%%%%%%%%%%%%%

\subsection{Proof of Theorem~\ref{thm:same_degree}}
\label{subsec:proof_thm}

We work with the notation and hypotheses from Theorem~\ref{thm:same_degree}. 

First of all, we can reduce to the case where $K$ is a finitely generated extension of  $\Fpbar$ since we can always replace $K$ with the field generated (over $\Fpbar$) by the coefficients of $f$ and $g$, and also by $\alpha$ and $\beta$. 

Note that  $K$ cannot be $\Fpbar$, since this would mean that $f,g\in\Fpbar[x]$ and also $\alpha,\beta\in\Fpbar$, which would make both $\alpha$ and $\beta$ preperiodic points for $f$, respectively $g$; thus, their orbits could not have infinite intersection. Therefore, $K$ is a function field of finite transcendence degree over $\Fpbar$.

We let $V$ be a projective variety defined over $\Fpbar$, regular in codimension $1$, whose function field is $K$. Then we let $\Omega_V$ be the set places of $K$ corresponding to the irreducible divisors of $V$. We have that $K$ is a product formula field with respect to places from $\Omega_V$ and so, we can construct the Weil height $h:K\lra \Q_{\ge 0}$ with respect to the absolute values from $\Omega_V$; for more details, we refer the reader to \cite[Chapter~3,~Section~3]{Lang}. We also construct the canonical heights with respect to the polynomials $f$ and $g$ (see \cite[Section~1]{Silverman} for more details); so, we let
\begin{equation}
\label{eq:canonical_height}
\hhat_f(\gamma)=\lim_{n\to\infty}\frac{h\left(f^n(\gamma)\right)}{\deg(f)^n}\text{ respectively, }\hhat_g(\gamma)=\lim_{n\to\infty}\frac{h\left(g^n(\gamma)\right)}{\deg(g)^n}
\end{equation}
be the canonical heights of any point $\gamma\in K$ for the action of $f$, respectively of $g$. As proven in \cite[Theorem~1.3]{rationality}, both $\hhat_f(\gamma)$ and $\hhat_g(\gamma)$ are non-negative rational numbers.

As shown in \cite{Rob} (see also \cite{Baker} for a more general result), since $\alpha$ and $\beta$ are non-preperiodic points for $f$ and respectively, for $g$ (because their corresponding orbits are infinite), then 
\begin{equation}
\label{eq:1010}
u_1:=\hhat_f(\alpha)>0\text{ and }u_2:=\hhat_g(\beta)>0.
\end{equation}
Indeed, \cite{Rob, Baker} prove that if the canonical height of a point (under the action of a polynomial) equals $0$ then the point is preperiodic, or the point along with the polynomial  are defined over the constant field (after a suitable linear conjugation). Now, in our case, since the constant field is $\Fpbar$, even if the polynomials $f$ or $g$ were defined over $\Fpbar$ (after some suitable conjugation), each point in $\Fpbar$ would be preperiodic, thus showing that indeed, each non-preperiodic point must have positive canonical height.

Now, we know from \cite[Theorem~1.1]{Silverman} that there exist some positive (real) constants $c_f$ and $c_g$ with the property that for each $\gamma\in K$, we have 
\begin{equation}
\label{eq:1011}
\left|h(\gamma) - \hhat_f(\gamma)\right|<c_f\text{ and }\left|h(\gamma)-\hhat_g(\gamma)\right|<c_g.
\end{equation}
The constants $c_f$ and $c_g$ depend on the respective polynomials $f$ and $g$ (and also on the set $\Omega_V$) but they are independent of the point $\gamma$. So, equation~\eqref{eq:1011} yields that (letting $c:=c_f+c_g$) we have
\begin{equation}
\label{eq:1012}
\left|\hhat_f(\gamma)-\hhat_g(\gamma)\right|<c\text{ for each }\gamma\in K.
\end{equation}
We apply inequality~\eqref{eq:1012} to each point $\gamma\in \OO_f(\alpha)\cap\OO_g(\beta)$. So, for each pair $(m,n)$ of positive integers for which $f^m(\alpha)=g^n(\beta)$, we have (according to inequality~\eqref{eq:1012}) that
\begin{equation}
\label{eq:1013}
\left|\hhat_f\left(f^m(\alpha)\right)-\hhat_g\left(g^n(\beta)\right)\right|<c.
\end{equation}
We know from \cite{Silverman} that $\hhat_f\left(f^m(\alpha)\right)=d^m\cdot \hhat_f(\alpha)$ and also, $\hhat_g\left(g^n(\beta)\right)=e^n\cdot \hhat_g(\beta)$ (which follows from equation~\eqref{eq:canonical_height}). So, using equations~\eqref{eq:1010}~and~\eqref{eq:1013}, we obtain that for each pair of integers $(m,n)$ corresponding to a point $\gamma=f^m(\alpha)=g^n(\beta)$, we have
\begin{equation}
\label{eq:1014}
\left|d^m\cdot u_1-e^n\cdot u_2\right|<c.
\end{equation}
Now, since $u_1$ and $u_2$ are positive rational numbers, it means that there exists some positive integer $D$ such that $\tilde{u}_1:=Du_1$ and $\tilde{u}_2:=Du_2$ are positive integers. So, we have the inequality:
\begin{equation}
\label{eq:1015}
\left|d^m\cdot \tilde{u}_1-e^n\cdot\tilde{u}_2\right|<cD,
\end{equation}
for each pair $(m,n)$ of positive integers corresponding to some point $\gamma$ from the intersection of the two orbits. Because there exist finitely many integers of absolute value less than $cD$, while $\OO_f(\alpha)\cap\OO_g(\beta)$ is infinite, we conclude that there exists some integer $v$ and there exist infinitely many pairs $(m,n)\in\N\times\N$ (such that $f^m(\alpha)=g^n(\beta)$) for which
\begin{equation}
\label{eq:1016}
d^m\cdot \tilde{u}_1-e^n\cdot \tilde{u}_2=v.
\end{equation}
Now, equation~\eqref{eq:1016} has finitely many integer solutions $(m,n)$, unless $v=0$; this is a special case of the famous Mordell-Lang theorem for algebraic tori proved by Laurent \cite{Laurent}. So, we must have that $v=0$ and then applying equation~\eqref{eq:1016} for two different pairs of integers $(m_1,n_1)$ and $(m_2,n_2)$ (with $m_2>m_1$ and thus, also $n_2>n_1$), we get that
\begin{equation}
\label{eq:1017}
d^{m_2-m_1}=e^{n_2-n_1}.
\end{equation}  
So, letting $r=\frac{m_2-m_1}{\gcd(m_2-m_1,n_2-n_1)}$ and $s:=\frac{n_2-n_1}{\gcd(m_2-m_1,n_2-n_1)}$, we obtain the desired conclusion~(i) in Theorem~\ref{thm:same_degree}. Furthermore, by our definition for $r$ and $s$ (which are coprime), we have that whenever $d^{\ell_1}=e^{\ell_2}$ for some positive integers $\ell_1$ and $\ell_2$, then we must have that
\begin{equation}
\label{eq:1020}
\ell_1=k\cdot r\text{ and }\ell_2=k\cdot s\text{ for some suitable }k\in\N.
\end{equation}

Now, for all but finitely many pairs $(m,n)$ corresponding to points $\gamma=f^m(\alpha)=g^n(\beta)$, we must have that 
\begin{equation}
\label{eq:1018}
d^m\cdot \tilde{u}_1=e^n\cdot \tilde{u}_2.
\end{equation}
Fix some pair $(m_0,n_0)$ of integers satisfying  equation~\eqref{eq:1018}. Then for any other pair of integers  $(m,n)$ satisfying both equation~\eqref{eq:1018} along with the inequality $m>m_0$, equation~\eqref{eq:1020} yields that there exists a positive integer $k$ such that $m=m_0+kr$ and $n=n_0+k s$. This provides the desired condition~(ii) and allows us to conclude our proof for Theorem~\ref{thm:same_degree}.

%%%%%%%%%%%%%%%%%%%%%%%%%%%%%%%%%%%%%%%%%%%%%%%%%%%%%%%%%%%%%%%%%%%%%%%%%%%%%%%

\subsection{Key reduction for Conjecture~\ref{conj:main}}
\label{subsec:conseq_thm}

Theorem~\ref{thm:same_degree} allows us to reduce Conjecture~\ref{conj:main} to the following special case of it.

\begin{conjecture}
\label{conj:special}
Let $K$ be a field of characteristic $p$ and let $f,g\in K[x]$ be polynomials of the same degree $d\ge 2$. If there exist $\alpha,\beta\in K$ with the property that for infinitely many $n\in\N$, we have 
\begin{equation}
\label{eq:1002}
f^n(\alpha)=g^n(\beta), 
\end{equation}
then at least one of the following two conditions must hold:
\begin{itemize}
\item[(1)] there exists $m\in\N$ such that $f^m=g^m$; 
\item[(2)] there exist linear polynomials $\lambda, \mu \in\Kbar[x]$ and there exist additive polynomials $\tilde{f},\tilde{g}\in\Kbar[x]$ such that 
\begin{equation}
	\label{eq:condition_additive_2}
	f=\lambda^{-1} \circ \tilde{f}\circ \lambda \text{ and }g=\mu^{-1} \circ \tilde{g}\circ \mu.
\end{equation}
Furthermore, there exists $m\in\N$ such that $\tilde{f}^m\circ \tilde{g}^m=\tilde{g}^m\circ \tilde{f}^m$.
\end{itemize}
\end{conjecture}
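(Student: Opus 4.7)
The plan is to recast the hypothesis as a dynamical Mordell--Lang problem on $\A^2$. Define $\Phi\colon\A^2\to\A^2$ by $\Phi(x,y)=(f(x),g(y))$, let $\Delta\subset\A^2$ be the diagonal, and set
\[
R:=\{n\in\N_0:f^n(\alpha)=g^n(\beta)\}=\{n\in\N_0:\Phi^n(\alpha,\beta)\in\Delta\}.
\]
Following the reduction in Subsection~\ref{subsec:proof_thm}, I would first pass to a finitely generated $K/\Fpbar$ and observe that $\alpha,\beta$ may be assumed non-preperiodic: if, say, $\alpha$ were preperiodic under $f$, then $\OO_f(\alpha)$ is finite, so the matched-index values $g^n(\beta)=f^n(\alpha)$ land in a finite set for infinitely many $n$, forcing $\beta$ preperiodic as well, and the conclusion can be checked by direct analysis of the eventual cycles. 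With $\alpha,\beta$ non-preperiodic, the expected shape of $R$ (under the dynamical Mordell--Lang conjecture in characteristic $p$) is a finite union of $p$-sets of the form $\{c_0+c_1p^{k_1n_1}+\cdots+c_sp^{k_sn_s}:n_i\in\N_0\}$.

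The next step is a dichotomy on $R$. In the \emph{arithmetic-progression case}, $R$ contains an infinite AP $\{an+b:n\ge 0\}$. Setting $\gamma_n:=f^{an+b}(\alpha)=g^{an+b}(\beta)$, one has $\gamma_{n+1}=f^a(\gamma_n)=g^a(\gamma_n)$; since $\alpha$ is non-preperiodic, the $\gamma_n$ are pairwise distinct, so $f^a$ and $g^a$ agree on an infinite set and hence as polynomials, giving conclusion~(1) with $m=a$. In the complementary \emph{Frobenius case}, $R$ contains no infinite AP but does contain a genuine $p$-power--indexed subset of the form $\{c_0+c\,p^{kn}:n\in\N_0\}$; this is the essential and much harder case.

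For the Frobenius case, my approach is geometric. For each $N\ge 1$, the algebraic curve $C_N\subset\A^2$ defined by $f^N(x)=g^N(y)$ contains the infinite set of $K$-points $\{(f^{n-N}(\alpha),g^{n-N}(\beta)):n\in R,\,n\ge N\}$. The function-field Siegel theorem in positive characteristic (Grauert, \cite{Grauert}) combined with the Bilu--Tichy classification in positive characteristic (\cite{Bilu-Tichy-p}) forces each $C_N$ either to be isotrivial or to admit a very restrictive decomposition. Using the observation from \cite[Section~5,~p.~1581]{Tom} that a separable additive factor of $f(x)-g(y)$ already produces isotriviality, together with the available fragments of positive-characteristic Ritt-type decomposition theory (\cite{Ritt-p}), one aims to deduce that $f$ and $g$ are, up to conjugation by linear polynomials in $\Kbar[x]$, additive polynomials $\tilde f,\tilde g$. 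The commutation statement $\tilde f^m\circ\tilde g^m=\tilde g^m\circ\tilde f^m$ is then extracted by translating the Frobenius-indexed orbit coincidences into relations in the twisted polynomial ring $\Kbar\{F\}$ of additive endomorphisms: the infinite family of $\Fp$-linear identities $\tilde f^{p^{kn}+c}(\alpha')=\tilde g^{p^{kn}+c}(\beta')$ (after conjugation) should pin down a relation forcing commutativity of a common iterate.

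The main obstacle is twofold. First, the dynamical Mordell--Lang conjecture in characteristic $p$ is itself deeply open and, as the authors recall, in certain cases (\cite{JIMJ}) equivalent to open polynomial-exponential problems in characteristic $0$, so any unconditional progress must avoid invoking it directly and handle the diagonal in $\A^2$ under product maps by hand. Second, even granting the $p$-set shape of $R$, the step from \emph{``the return set is Frobenius-indexed''} to \emph{``both $f$ and $g$ are simultaneously additive after a linear conjugation''} is the truly new ingredient: it requires leveraging the isotriviality of every $C_N$ in a coherent manner to produce compatible linearizations of $f$ and $g$, and this simultaneous-conjugation step is the one I expect will need a genuinely new technique beyond the tools currently in the literature.
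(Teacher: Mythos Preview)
The statement you are analyzing is a \emph{conjecture}, not a theorem: the paper does not prove it. What the paper does (in Section~\ref{sec:proofs_2}) is exactly the reformulation you give---view the return set $R$ as the set of $n$ with $\Phi^n(\alpha,\beta)\in\Delta$ for $\Phi=(f,g)$ on $\A^2$, invoke the dynamical Mordell--Lang conjecture in characteristic $p$ (Conjecture~\ref{conj:DML_p}) to split into an arithmetic-progression case~(I) and a $p$-set case~(II), and then dispatch case~(I) via Lemma~\ref{lem:easy_arith_progr} (your argument for that case is equivalent to the paper's). For case~(II) the paper offers only Remark~\ref{rem:p-sets}: the heuristic that $p$-set return sets should arise only from algebraic-group actions, together with the evidence of Section~\ref{sec:examples}. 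So your outline is not merely ``the same approach as the paper''---it \emph{is} the paper's approach, and both stop at the same place.

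Your proposal goes a step further in sketching a strategy for the Frobenius case (isotriviality of all the curves $C_N$ via \cite{Grauert}, positive-characteristic Ritt decompositions \cite{Ritt-p}, and relations in the twisted ring $\Kbar\{F\}$), but you have correctly identified this as the genuinely open part: producing compatible linearizations of $f$ and $g$ to additive polynomials from infinitely many isotriviality statements, and then extracting the commutation of iterates from the $p$-power--indexed identities, are precisely the missing ingredients, and nothing in the cited literature supplies them. Two minor cautions on your write-up: (i) the preperiodic reduction is not as innocuous as you suggest---if $\alpha,\beta$ are both preperiodic (which is automatic when everything lies in $\Fpbar$), your AP argument fails because the $\gamma_n$ need not be distinct, and ``direct analysis of the eventual cycles'' does not obviously force conclusion~(1) or~(2); (ii) for a curve in $\A^2$ the conjectural $p$-sets already have the single-exponent shape $\{ap^{rk}+b\}$ of Conjecture~\ref{conj:DML_p}, not the multi-index form you wrote down.
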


\begin{proposition}
\label{prop:main_to_special}
Conjectures~\ref{conj:main} and \ref{conj:special} are equivalent.
\end{proposition}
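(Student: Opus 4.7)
The plan is to prove both implications separately. The direction Conjecture~\ref{conj:main} $\Rightarrow$ Conjecture~\ref{conj:special} is immediate: if $f^n(\alpha)=g^n(\beta)$ for infinitely many $n$ with $\deg f=\deg g=d$, then $\OO_f(\alpha)\cap\OO_g(\beta)$ is infinite and Conjecture~\ref{conj:main} applies; its conclusion $f^m=g^n$ forces $d^m=d^n$, hence $m=n$, matching Conjecture~\ref{conj:special}(1), and its conclusion~(2) is already identical to Conjecture~\ref{conj:special}(2).

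For the converse, given $f,g\in K[x]$ with $\deg f=d\ge 2$, $\deg g=e\ge 2$ and $\OO_f(\alpha)\cap\OO_g(\beta)$ infinite, I would apply Theorem~\ref{thm:same_degree} to obtain $r,s\in\N$ with $d^r=e^s$ and $a,b\in\N_0$ such that $f^{rn+a}(\alpha)=g^{sn+b}(\beta)$ for infinitely many $n$. Setting $F:=f^r$, $G:=g^s$, $\alpha':=f^a(\alpha)$ and $\beta':=g^b(\beta)$, we have $\deg F=\deg G=d^r$ and $F^n(\alpha')=G^n(\beta')$ for infinitely many $n$, so Conjecture~\ref{conj:special} applies to $(F,G,\alpha',\beta')$. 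Its case~(1) gives $F^m=G^m$, i.e., $f^{rm}=g^{sm}$, which is conclusion~(1) of Conjecture~\ref{conj:main}. Case~(2) furnishes linear $\lambda_0,\mu_0\in\Kbar[x]$ and additive $\tilde F,\tilde G\in\Kbar[x]$ with $F=\lambda_0^{-1}\circ\tilde F\circ \lambda_0$, $G=\mu_0^{-1}\circ\tilde G\circ \mu_0$ and $\tilde F^m\circ\tilde G^m=\tilde G^m\circ\tilde F^m$ for some $m$; it remains to lift this additive-conjugate conclusion from $(F,G)$ back to $(f,g)$.

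The lifting step rests on the following key lemma, which I would prove separately: \emph{if $\phi\in\Kbar[x]$ has degree at least $2$ and $\phi^r$ is additive for some $r\ge 1$, then $\phi=\tilde\phi+c$ for some additive polynomial $\tilde\phi\in\Kbar[x]$ and some constant $c\in\Kbar$.} Granting the lemma, applying it to $\phi:=\lambda_0\circ f\circ \lambda_0^{-1}$ (which satisfies $\phi^r=\tilde F$) together with Lemma~\ref{lem:affine} yields $\phi=\tau_{-\delta}\circ\tilde\phi\circ\tau_\delta$ for some $\delta\in\Kbar$ with $\tilde\phi(\delta)-\delta=c$, hence $f=(\tau_\delta \lambda_0)^{-1}\circ\tilde\phi\circ(\tau_\delta \lambda_0)$ is a conjugate of the additive polynomial $\tilde\phi$. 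A direct induction using additivity gives $\phi^r(x)=\tilde\phi^r(x)+\sum_{i=0}^{r-1}\tilde\phi^i(c)$; since $\tilde F=\phi^r$ is additive (so in particular vanishes at $0$), the constant sum must vanish and one obtains $\tilde F=\tilde\phi^r$. The analogous analysis gives $g=\mu^{-1}\circ\tilde\psi\circ\mu$ with $\tilde G=\tilde\psi^s$ and $\tilde\psi$ additive. The commutation $\tilde F^m\circ\tilde G^m=\tilde G^m\circ\tilde F^m$ then reads $\tilde\phi^{rm}\circ\tilde\psi^{sm}=\tilde\psi^{sm}\circ\tilde\phi^{rm}$, and since powers of commuting polynomials commute, setting $M:=rsm$ yields $\tilde\phi^M\circ\tilde\psi^M=\tilde\psi^M\circ\tilde\phi^M$---exactly conclusion~(2) of Conjecture~\ref{conj:main}.

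The main obstacle is the lemma itself. A natural first step is the chain rule applied to $(\phi^r)'$: since $\phi^r$ is additive, $(\phi^r)'$ is a constant $\alpha$, and $\alpha=\prod_{i=0}^{r-1}\phi'(\phi^i(x))$. If $\alpha\neq 0$, each factor must be a nonzero constant polynomial, forcing $\phi'$ to be a nonzero constant---which in characteristic $p$ means $\phi(x)=a_0+a_1x+\psi(x^p)$ for some $\psi\in\Kbar[x]$ and $a_1\neq 0$; if $\alpha=0$, then some factor $\phi'(\phi^i(x))$ vanishes identically, forcing $\phi'\equiv 0$ (since the image of $\phi^i$ is all of $\Kbar$), so $\phi(x)=\psi(x^p)$. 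Either way one is reduced to an analogous iteration property for the auxiliary $\psi$, and the technical heart of the proof is an inductive argument---ruling out accidental cancellations in the expansion of $\phi^r$---that forces every coefficient of $\phi$ at an exponent which is not a power of $p$ to vanish.
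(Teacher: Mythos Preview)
Your overall structure matches the paper's proof exactly: reduce via Theorem~\ref{thm:same_degree} to $(F,G)=(f^r,g^s)$ with equal degree, apply Conjecture~\ref{conj:special}, and in case~(2) lift the additive-conjugate conclusion back from $(F,G)$ to $(f,g)$. The only substantive difference is how the lifting step is handled. The paper does not attempt to reprove your ``key lemma'' but simply cites \cite[Theorem~4]{Ritt-p} (Dorey--Whaples), which gives the slightly stronger statement: if some iterate $\phi^r$ of a polynomial $\phi$ is additive, then $\phi$ itself is additive. With this in hand, $\tilde f:=\lambda_0\circ f\circ\lambda_0^{-1}$ is already additive (not merely additive plus a constant), so the extra conjugation by a translation via Lemma~\ref{lem:affine} becomes unnecessary, and $\tilde F=\tilde f^r$, $\tilde G=\tilde g^s$ follow immediately. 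Your derivative/chain-rule sketch toward the lemma is a reasonable first step, but as you acknowledge the inductive ``technical heart'' is not carried out; rather than completing that argument you can simply invoke the Dorey--Whaples result, which is exactly what the paper does.
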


\begin{proof}
First of all, clearly, Conjecture~\ref{conj:special} is the special case of Conjecture~\ref{conj:main} when the two polynomials $f$ and $g$ have the same degree. So, it remains to show that assuming the validity of Conjecture~\ref{conj:special}, then we can recover the desired conclusion in Conjecture~\ref{conj:main}.

So, we let $f,g\in K[x]$ be polynomials of degrees $d\ge 2$, respectively $e\ge 2$ satisfying the hypotheses of Conjecture~\ref{conj:main} for some points $\alpha$ and $\beta$ (non-preperiodic for $f$, respectively $g$). Since $\OO_f(\alpha)\cap\OO_g(\beta)$ is infinite, then Theorem~\ref{thm:same_degree} yields that conclusions~(i)~and~(ii) must hold. In particular, for some positive integers $r$ and $s$, we have $d^r=e^s$ and furthermore, there exist infinitely many $n\in\N$ such that
\begin{equation}
\label{eq:1003}
f^{rn+a}(\alpha)=g^{sn+b}(\beta).
\end{equation} 
We let $\alpha_1:=f^a(\alpha)$ and $\beta_1:=g^b(\beta)$; also, let $f_1:=f^r$ and $g_1:=g^s$. Then $\deg(f_1)=\deg(g_1)\ge 2$ and equation~\eqref{eq:1003} shows that the hypotheses in Conjecture~\ref{conj:special} are met. Therefore, we know that either conclusion~(1) or conclusion~(2) from Conjecture~\ref{conj:special} must hold. Clearly, if $f_1$ and $g_1$ have a common iterate, this yields that also $f$ and $g$ have a common iterate. 

So, assume next that conclusion~(2) in Conjecture~\ref{conj:special} holds for $f_1$ and $g_1$ and we will show that conclusion~(2) in Conjecture~\ref{conj:main} must hold for $f$ and $g$. In particular, we know that there exist some linear polynomials $\lambda, \mu \in\Kbar[x]$ such that
\begin{equation}
\label{eq:1005}
\tilde{f}_1:=\lambda\circ f_1\circ \lambda^{-1}\text{ and } \tilde{g}_1:=\mu\circ g_1\circ \mu^{-1}
\end{equation}
are additive polynomials. Letting 
$$\tilde{f}:=\lambda\circ f\circ \lambda^{-1}\text{ and }\tilde{g}:=\mu\circ g\circ \mu^{-1},$$
we get (using equation~\eqref{eq:1005} and the fact that $f_1=f^r$, while $g_1=g^s$) that $\tilde{f}^r$ and $\tilde{g}^s$ are additive polynomials. Using \cite[Theorem~4]{Ritt-p}, we obtain that $\tilde{f}$ and $\tilde{g}$ must be additive polynomials, as desired in conclusion~(2) of Conjecture~\ref{conj:main}. Finally, the ``furthermore'' statement in part~(2) of Conjecture~\ref{conj:main} is immediately implied by the ``furthermore'' statement from part~(2) of Conjecture~\ref{conj:special}; indeed, if $\tilde{f}^{rm}$ and $\tilde{g}^{sm}$ commute, then $\tilde{f}^{rsm}$ and $\tilde{g}^{rsm}$ commute as well.

This concludes our proof of Proposition~\ref{prop:main_to_special}.
\end{proof}

%%%%%%%%%%%%%%%%%%%%%%%%%%%%%%%%%%%%%%%%%%%%%%%%%%%%%%%%%%%%%%%%%%%%%%%%%%%%%%
%%%%%%%%%%%%%%%%%%%%%%%%%%%%%%%%%%%%%%%%%%%%%%%%%%%%%%%%%%%%%%%%%%%%%%%%%%%%%%

\section{Connections to the dynamical Mordell-Lang conjecture}
\label{sec:proofs_2}

Proposition~\ref{prop:main_to_special} shows that Conjecture~\ref{conj:main} reduces to its special case from Conjecture~\ref{conj:special}. In other words, it suffices to work in Conjecture~\ref{conj:main} under the extra hypotheses that the polynomials $f$ and $g$ have the same degree and there exist infinitely many $n\in\N$ such that $f^n(\alpha)=g^n(\beta)$. Geometrically, this last condition can be reformulated by asking that the diagonal line in $\A^2$ has infinite intersection with the orbit of the point $(\alpha,\beta)\in\A^2(K)$ under the action of the regular self map on $\A^2$ given by $(x,y)\mapsto (f(x),g(y))$. This allows us to connect our Conjecture~\ref{conj:main} (through its reduction from Conjecture~\ref{conj:special}) to a special case of the dynamical Mordell-Lang conjecture in characteristic $p$. Indeed, the following statement is a special case of the more general question posed in \cite[Conjecture~13.2.0.1]{BGT-book} (see also \cite{G}, which deals specifically with the case of curves in the dynamical Mordell-Lang conjecture).

\begin{conjecture}
\label{conj:DML_p}
Let $K$ be a field of characteristic $p>0$ and let $f,g\in K[x]$. We denote by $\Phi:\A^2\lra \A^2$ be the regular self-map given by
\begin{equation}
\label{eq:coordinatewise_action}
(x,y)\mapsto \left(f(x), g(y)\right).
\end{equation}
Then for any point $\gamma\in \A^2(K)$ and for any curve $C\subset \A^2$ defined over $K$, the return set $R:=\left\{n\in\N_0\colon\Phi^n(\gamma)\in C\right\}$ is a union of finitely many (infinite) arithmetic progressions along with finitely many sets of the form
\begin{equation}
\label{eq:p-set}
\left\{ap^{rk}+b\colon k\in\N_0\right\},
\end{equation}
for some given rational numbers $a$ and $b$ and some given non-negative integer $r$.
\end{conjecture}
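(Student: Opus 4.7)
The plan is to combine the height-theoretic reduction from Theorem~\ref{thm:same_degree} with a characteristic-$p$ analogue of the Skolem--Mahler--Lech theorem (due to Derksen) for zero sets of linear recurrences over fields of characteristic $p$, which is exactly the source of the $p$-sets in \eqref{eq:p-set}. First I would reduce to $K$ finitely generated over $\Fpbar$ by adjoining the coefficients of $f$, $g$ and the coordinates of $\gamma$. If $\gamma$ is preperiodic under $\Phi$, the return set is eventually periodic and the conclusion is immediate. If $C$ is a horizontal or vertical line, the problem reduces to the one-dimensional dynamical Mordell--Lang problem on $\A^1$, which is difficult but already the subject of \cite{G, JIMJ}. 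So I may assume $C$ projects dominantly to each axis, $\gamma = (\alpha,\beta)$ is not preperiodic and, if $f$ or $g$ is linear, I would handle this case directly as in Subsection~\ref{subsec:ex_deg_1}; thus I can further assume $\deg(f), \deg(g) \ge 2$.

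Next, I would apply the canonical height comparison underlying Theorem~\ref{thm:same_degree}: if the return set is infinite, then $\deg(f)^r = \deg(g)^s$ for some coprime $r,s$, and all but finitely many $n \in R$ lie in a common coset of $r\N_0$. Replacing $f$ by $f^r$, $g$ by $g^s$ and $\alpha,\beta$ by appropriate iterates, the problem reduces to describing the zero set of the sequence $u_n := F(f^n(\alpha), g^n(\beta))$, where $F \in K[x,y]$ defines $C$ and $\deg(f) = \deg(g) = d$. When $f$ and $g$ are conjugate to additive polynomials, the iterates $f^n(\alpha)$ and $g^n(\beta)$ are values of twisted Frobenius polynomials evaluated at $\alpha$ and $\beta$, so $u_n$ can be recast as a polynomial expression in finitely many characteristic-$p$ linear recurrent sequences. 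Derksen's theorem then describes $\{n : u_n = 0\}$ as a finite union of arithmetic progressions and sets of the exact form $\{ap^{rk}+b\}$, giving the conclusion in this case and simultaneously explaining the $p$-set phenomenon observed in Section~\ref{sec:examples}.

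The hard part will be the non-additive case, where the orbit sequences do not satisfy any linear recurrence. Here I would attempt a characteristic-$p$ Ritt-style trichotomy: either $f$ and $g$ share a common iterate (producing full arithmetic progressions in $R$), or, as suggested by Conjecture~\ref{conj:main}, after linear conjugation they become additive (falling under the previous paragraph), or the curve $C$ together with the orbits forces an isotrivial configuration in the sense of \cite{Grauert, Bilu-Tichy-p}. The isotrivial case is where the main obstruction lies: one must show via a delicate height and specialization argument that the only contributions to $R$ not accounted for by arithmetic progressions are $p$-sets of the required shape. Since this requires a more complete theory of polynomial decomposition in positive characteristic than is currently available, and since even one-dimensional special cases reduce to hard polynomial-exponential equations \cite{JIMJ}, I expect this step to be the principal difficulty and the reason Conjecture~\ref{conj:DML_p} remains open in general, intertwined as it is with Conjecture~\ref{conj:main} itself.
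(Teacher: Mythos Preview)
The statement you are addressing is a \emph{conjecture}, not a theorem, and the paper does not prove it. It is presented as a special case of the open dynamical Mordell--Lang conjecture in positive characteristic (citing \cite[Conjecture~13.2.0.1]{BGT-book} and \cite{G}), and the paper uses it only conditionally: assuming Conjecture~\ref{conj:DML_p}, it derives conditions~(I) and~(II), proves in Lemma~\ref{lem:easy_arith_progr} that~(I) forces a common iterate, and then in Remark~\ref{rem:p-sets} invokes heuristics to motivate conclusion~(2) of Conjecture~\ref{conj:main}. There is therefore no proof in the paper to compare your proposal against.

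Your proposal is, correspondingly, a research outline rather than a proof, as you yourself concede in the last paragraph. Two technical points are worth flagging. First, the height argument of Theorem~\ref{thm:same_degree} is tailored to the diagonal $\Delta\subset\A^2$; for a general curve $C$ there is no reason that membership in $C$ forces $d^m u_1 - e^n u_2$ to be bounded, so the reduction to equal degrees does not go through as stated. Second, even when $f$ and $g$ are additive, the sequence $u_n = F(f^n(\alpha),g^n(\beta))$ is typically not a linear recurrence sequence of fixed order over $K$: the iterate $f^n$ has degree $d^n$, so the monomials appearing in $u_n$ involve Frobenius powers of unbounded exponent, and Derksen's theorem does not apply directly. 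The $p$-set structure in the examples of Section~\ref{sec:examples} arises instead from explicit binomial-coefficient identities, not from an off-the-shelf Skolem--Mahler--Lech mechanism.
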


\begin{remark}
\label{rem:form_a_b}
In equation~\eqref{eq:p-set}, if $r=0$ then that entire set is just a singleton. 

Next, assume the set from equation~\eqref{eq:p-set} is infinite. Since the elements from a set of the form equation~\eqref{eq:p-set} must all be (non-negative) integers, we immediately conclude that $a$ and $b$ are rational numbers of the form $\frac{a_0}{p^r-1}$, respectively $\frac{b_0}{p^r-1}$ for some integers $a_0>0$ and $b_0$.
\end{remark}

So, we go back to the setting from Conjecture~\ref{conj:special} of two polynomials $f,g\in K[x]$ of same degree (larger than $1$) and two starting points $\alpha,\beta\in K$ for which there exist infinitely many $n\in\N$ such that $f^n(\alpha)=g^n(\beta)$. Then Conjecture~\ref{conj:DML_p} yields that (at least) one of the following two conditions must hold:
\begin{itemize}
\item[(I)] there exist integers $a>0$ and $b\ge 0$ such that for all $n\in\N_0$, we have
\begin{equation}
\label{eq:arith_progr_in}
f^{an+b}(\alpha)=g^{an+b}(\beta);
\end{equation} 
\item[(II)] there exists $r\in\N$ and there exist rational numbers $a>0$ and $b$ such that for each  $n\in\N_0$, we have
\begin{equation}
\label{eq:p-set_in}
f^{ap^{rn}+b}(\alpha)=g^{ap^{rn}+b}(\beta).
\end{equation}
\end{itemize}

\begin{lemma}
\label{lem:easy_arith_progr}
With the above notation for $K$, $f$, $g$, $\alpha$ and $\beta$, if condition~(I) holds, then $f$ and $g$ share a common iterate, i.e., conclusion~(1) from Conjecture~\ref{conj:main} holds.
\end{lemma}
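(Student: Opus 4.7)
The plan is to shift by $b$ iterations and pass to the common iterates $f^a, g^a$, thereby reducing condition~(I) to the assertion that two polynomials agree on an infinite subset of $K$. Set $h_1 := f^a$, $h_2 := g^a$, $\alpha' := f^b(\alpha)$, and $\beta' := g^b(\beta)$. Then equation~\eqref{eq:arith_progr_in} is equivalent to
\[
h_1^n(\alpha') = h_2^n(\beta') \qquad \text{for every } n \in \N_0.
\]
Taking $n = 0$ already forces $\alpha' = \beta'$; denote this common value by $\gamma_0$, and define $\gamma_n := h_1^n(\gamma_0) = h_2^n(\gamma_0)$.

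The key observation is then immediate: for every $n \ge 0$,
\[
h_1(\gamma_n) = h_1^{n+1}(\gamma_0) = \gamma_{n+1} = h_2^{n+1}(\gamma_0) = h_2(\gamma_n),
\]
so the polynomials $h_1$ and $h_2$ take identical values at every $\gamma_n$. The hypothesis of Conjecture~\ref{conj:main} (and of Conjecture~\ref{conj:special}) requires $\OO_f(\alpha) \cap \OO_g(\beta)$ to be infinite, which forces $\OO_f(\alpha)$ itself to be infinite, i.e., $\alpha$ is non-preperiodic under $f$. Since $\{\gamma_n\}_{n \ge 0}$ is (up to reindexing) a subsequence of $\OO_f(\alpha)$, it is infinite. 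Therefore $h_1 - h_2 \in K[x]$ is a one-variable polynomial with infinitely many roots and must vanish identically, giving $f^a = g^a$. This is exactly conclusion~(1) of Conjecture~\ref{conj:main}.

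There is essentially no obstacle in this argument; it is purely elementary, relying only on the fact that a nonzero univariate polynomial has finitely many roots together with the automatic implication that infinite orbit intersection forces non-preperiodicity of the starting points. The entire content of the reduction is the move from the pointwise identity $h_1^n(\gamma_0) = h_2^n(\gamma_0)$ to the polynomial identity $h_1 = h_2$, which succeeds precisely because non-preperiodicity supplies an infinite set of test points.
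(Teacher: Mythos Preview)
Your proof is correct and is essentially the same argument as the paper's: both show that $f^a$ and $g^a$ agree at the infinitely many points $\gamma_n=f^{an+b}(\alpha)=g^{an+b}(\beta)$ and conclude $f^a=g^a$. The paper phrases this geometrically (the diagonal $\Delta\subset\A^2$ is invariant under $(f^a,g^a)$), while you phrase it as the vanishing of the univariate polynomial $h_1-h_2$ on an infinite set, but these are the same observation.
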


\begin{proof}
Using the hypothesis from condition~(I) above, we get that there exist infinitely many points $\gamma$ on the diagonal line $\Delta\subset \A^2$ (all of the form $(f^{an+b}(\alpha),g^{an+b}(\beta))$), which are mapped by the endomorphism of $\A^2$ given by 
$$(x,y)\mapsto \left(f^a, g^a\right)$$
back on the same diagonal line $\Delta$. This means that $\Delta$ is mapped by $(f^a,g^a)$ back to itself, i.e., $f^a=g^a$, as desired.

This concludes our proof of Lemma~\ref{lem:easy_arith_progr}.
\end{proof}

\begin{remark}
\label{rem:p-sets}
Lemma~\ref{lem:easy_arith_progr} along with Conjecture~\ref{conj:DML_p} further reduces Conjecture~\ref{conj:special} (and therefore Conjecture~\ref{conj:main}) to the special case that condition~(II) from above holds (while condition~(I) is not met). In this case, it is widely believed (see the discussion from \cite[p.~3]{X-Y}) that condition~(II) alone appears in the dynamical Mordell-Lang conjecture only when the dynamical action is induced by an algebraic group action. This heuristic along with all the examples we found (see Section~\ref{sec:examples}) led us to conjecturing the conclusion~(2) from our Conjecture~\ref{conj:main}. 
\end{remark}

%%%%%%%%%%%%%%%%%%%%%%%%%%%%%%%%%%%%%%%%%%%%%%%%%%%%%%%%%%%%%%%%%%%%%%%%%%%%%%%
%%%%%%%%%%%%%%%%%%%%%%%%%%%%%%%%%%%%%%%%%%%%%%%%%%%%%%%%%%%%%%%%%%%%%%%%%%%%%%%

\end{document}